\DeclareMathOperator{\essinf}{essinf}
\newtheorem {theorem} {Theorem}
\newtheorem {proposition} [theorem]{Proposition}
\newtheorem {corollary} [theorem]{Corollary}
\newtheorem {lemma}  [theorem]{Lemma}
\theoremstyle{definition}
\newtheorem{df}{Definition}
\theoremstyle{remark}
\newtheorem{rem}{Remark}
\newcommand{\rd}{\mathbb{R}^d}
\begin{document}

\title{On characterization of   weakly porous sets via dyadic coverings}



\author{Andrei V.~Vasin }

\address{St.~Petersburg Department
of Steklov Mathematical Institute, Fontanka 27, St.~Petersburg
191023, Russia}
\address{Admiral Makarov State University of Maritime and Inland Shipping,
Dvinskaya st.~5/7, St.~Petersburg 198035, Russia}

\email{andrejvasin@gmail.com}

\thanks{This research was supported by the Russian Science Foundation (grant No.~23-11-00171).}

\subjclass[2010]{Primary 28A75; Secondary 28A80}

\keywords{weakly porous sets, BLO condition, dyadic covering}

\begin{abstract}
 We consider the class of weakly porous sets in Euclidean spaces. As our
 main goal,   we  give a precise  characterization in terms of dyadic
covering  of these sets. Also, we obtain the  Carleson embedding inequality  for porous sets.
\end{abstract}

\maketitle

\section{Introduction and main results}\label{s_int}
\subsection{Background}
A set $E$ of the unit circle $\mathbb{T} $ is called porous if there exists a constant $C$ such that for any arc  $I\subset\mathbb{T}$ there
is  a subarc $M(I)\subset I$ containing no points of $E$ and satisfying
\[
|M(I)|> C | I|.
\]
  Dyn'kin proved the following uniform  entropy characterization of the porous set \cite{D}:
\begin{equation}\label{e_pri}
\frac{1}{|I|}\int_{I}\log\frac{1}{dist (x, E)}dx\ < \log\frac{1}{|I|}+C,
\end{equation}
where $C$ is not depending on $I$. There is  a discrete form of (\ref{e_pri})
\begin{equation}\label{e_prd}
  \sum_{J \subset I,\;J\cap E=\emptyset}|J| \log \frac{1}{|J|}\leq |I|\left( \log \frac{1}{|I|} + C\right)
  \end{equation}
 with  $C$ independent of $I$.

The porosity condition  and estimate (\ref{e_pri})  appear naturally
in the free interpolation problems for different classes of analytic functions
smooth up to the boundary of the unit disc $\mathbb{D} \subset \mathbb{R}^2$ (see \cite{D}).

 Studying  the  weak embedding property  of singular inner functions,  Borichev, Nicolau and Thomas  \cite[Lemma 7]{BNT}  found  the Carleson type characterization of porosity: a set $E\subset\mathbb{T}$ is porous if and only if there exists $C$ such that  for  any arc $I \subset \mathbb{T}$ one has
\begin{equation}\label{e_prdya}
\sum_{J \cap E \neq \emptyset,\; J \in \mathcal{D}(I)}|J|\leq C |I|,
\end{equation}
 where $\mathcal{D}(I)$ is a dyadic decomposition  of $I$.

Thus, in  (\ref{e_prd}) the description of a porous set involves arcs free from points of the set, while in (\ref{e_prdya})  arcs intersecting the set are used.

Our main goal in this paper is to extend    property (\ref{e_prdya})
to the higher dimensional case, and to find explicit relations between (\ref{e_prd}) and (\ref{e_prdya}) as well. Note that   extension of  the Dyn'kin inequality to the higher dimensional case is known to be related to the Aikawa
condition  (see \cite{Dy} and references therein).
Also, a further extension of Borichev-Nicolau-Thomas characterization together with the dyadic technique 
 (see
 Meyer-Coifman  \cite[p. 59, Lemma 5]{MC}) leads to the dyadic Carleson embedding inequality  for the porous sets.

We deal  with  certain generalization, and consider weakly porous sets in $\mathbb{R}^d$.
 These sets    on the unit circle $\mathbb{T}$ appeared   in   the study  of  the properties of the Fuchsian groups \cite{V}, where
it   was proved  that  the  function
 $\log dist^{-1}(x, E)$
 belongs to the space $BLO(\mathbb{T})$ if and only if $E$ is weakly porous.
  Anderson, Lehrb\"{a}ck,  Mudarra and  V\"{a}h\"{a}kangas \cite{ALMV} extended  the definition of
   weakly porous sets to $\mathbb{R}^d$. Also, they find  when  the distance function
 $dist^{-\alpha}(x, E)$
 belongs to the Muckenhoupt class $A_1$.
A precise quantitative characterization in terms of the so-called
Muckenhoupt exponent of $E$ is also given in \cite{ALMV}.

 \subsection{Dyadic decomposition}
 Throughout this paper, we consider $\rd$ equipped with the Euclidean distance and the
$d$-dimensional Lebesgue  measure.    The Lebesgue  measure of $E$ is denoted by $|E|$, and if $x \in \rd$, then $ dist(x;E)$
denotes the distance from $x$ to the set $E$.

We only consider cubes which are half-open and have sides parallel to the
coordinate axes. That is, a cube in $\rd$ is a set of the form
\[Q = [a_1; b_1)\times \dots   \times [a_d; b_d)\]
with side-length $\ell(Q) = b_1-a_1 = \dots = b_d-a_d$.
The dyadic decomposition of a cube $R\subset \rd$ is
\[ \mathcal{D}(R)= \bigcup_{j\geq 0} \mathcal{D}_j(R)\]
where each $\mathcal{D}_j(R)$ consists of the $2^{jd}$  pairwise disjoint (half-open) cubes $Q$, with side length
$\ell(Q) = 2^{-j}\ell(R)$, such that
\[R= \bigcup_{Q \in \mathcal{D}_j(R)} Q\]
for every $j =0,1, \dots $. The cubes in  $\mathcal{D}(R)$ are called dyadic cubes (with respect to $R$).
Let $j >0$  and $Q \in \mathcal{D}_j(R)$. Then there exists a unique dyadic cube $\pi Q \in \mathcal{D}_{j-1}(R)$
satisfying $Q \subset \pi Q$, and $|\pi Q|=2^d|Q|$.
\begin{df}
  Let $E$ be a nonempty set.
  For each cube $R\subset\mathbb{R}^d$  let
\[\mathcal{D}(R,E)=\{Q \in \mathcal{D}(R): \:  Q \cap E \neq \emptyset\}\]
be a family of all dyadic cubes intersecting $E$, and
let
\[\mathcal{F}(R,E)= \{Q' \in \mathcal{D}(R): Q'\neq R,\; \:  Q' \cap E = \emptyset, \; \pi Q'\cap E \neq \emptyset\}\]
 be a family of all maximal   pairwise disjoint dyadic cubes not intersecting $E$.

Denote by  $M(R)$  one of the largest dyadic cubes $Q\in \mathcal{D}(R)$ not containing points of $E$.
\end{df}

We start with a  key result about  changing the order of summation that may be of independent interest.

\begin{proposition}
 \label{p_id}
  Let $E \in \rd$ be a  set  such that closure  of $E$ has zero Lebesgue measure.  Then
 \begin{equation}\label{e_id}
 \sum_{Q \in \mathcal{D}(R,E)}|Q|
 = \sum_ {Q'\in \mathcal{F}(R,E) }|Q'|\log \frac{\ell(R)}{\ell(Q')}.
\end{equation}
\end{proposition}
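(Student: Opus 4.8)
The plan is to prove the identity by organizing the sum on the left according to which cube of $\mathcal{F}(R,E)$ a given dyadic cube $Q\in\mathcal{D}(R,E)$ eventually lies inside. The geometric picture is this: every dyadic cube $Q$ that meets $E$ has the property that, going down its dyadic descendants, one keeps hitting cubes that meet $E$ until — possibly — one falls into a cube that misses $E$; conversely every cube $Q'\in\mathcal{F}(R,E)$ sits below a unique chain of ancestors all of which meet $E$ (its parent $\pi Q'$ meets $E$ by definition, and all further ancestors contain $\pi Q'$ hence meet $E$). So the natural bijective bookkeeping is: a cube $Q'\in\mathcal{F}(R,E)$ is "charged" by exactly those $Q\in\mathcal{D}(R,E)$ that are ancestors of $Q'$.

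**Key steps.** First I would set up, for each $Q'\in\mathcal{F}(R,E)$ with $Q'\in\mathcal{D}_k(R)$, the chain of ancestors $\pi Q', \pi^2 Q',\dots,\pi^k Q'=R$; there are exactly $k=\log_2(\ell(R)/\ell(Q'))$ of them, and each lies in $\mathcal{D}(R,E)$. This gives the inequality "$\le$" in one direction almost immediately once I show the families $\{$ancestors of $Q'\}$ essentially tile $\mathcal{D}(R,E)$. Second — and this is the crux — I must show that every $Q\in\mathcal{D}(R,E)$ is an ancestor of \emph{at least one} $Q'\in\mathcal{F}(R,E)$, and I must control the overcounting: a single $Q\in\mathcal{D}(R,E)$ is an ancestor of many cubes in $\mathcal{F}(R,E)$. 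The right way to handle this is to write $|Q| = \sum_{Q'\in\mathcal{F}(R,E),\ Q'\subset Q}|Q'|$, i.e. to use that the maximal $E$-free dyadic cubes inside $Q$ partition $Q$ up to a null set — this is exactly where the hypothesis $|\overline{E}|=0$ enters, since I need $|Q\cap E|=0$ so that the cubes of $\mathcal{F}(R,E)$ contained in $Q$ cover all of $Q$ in measure. Granting that, I compute
\[
\sum_{Q\in\mathcal{D}(R,E)}|Q| = \sum_{Q\in\mathcal{D}(R,E)}\ \sum_{\substack{Q'\in\mathcal{F}(R,E)\\ Q'\subset Q}}|Q'| = \sum_{Q'\in\mathcal{F}(R,E)}|Q'|\cdot\#\{Q\in\mathcal{D}(R,E): Q'\subset Q\},
\]
and then identify the inner count: the dyadic cubes $Q$ containing $Q'$ are precisely $\pi Q',\dots,\pi^k Q'$ together with $Q'$ itself — but $Q'\notin\mathcal{D}(R,E)$, so the count is exactly $k=\log_2(\ell(R)/\ell(Q'))$. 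Substituting gives the right-hand side.

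**Main obstacle and loose ends.** The main technical point to get right is the measure-theoretic covering claim: that for each $Q\in\mathcal{D}(R,E)$ one has $|Q|=\sum_{Q'\in\mathcal{F}(R,E),\,Q'\subset Q}|Q'|$. One shows $Q\setminus\bigcup\{Q':Q'\in\mathcal{F}(R,E),Q'\subset Q\}$ consists of points $x$ that lie in infinitely many nested dyadic cubes each meeting $E$; by a shrinking argument those $x$ lie in $\overline{E}$, which has measure zero by hypothesis. I would also need to check that the interchange of summations is legitimate — the terms are nonnegative, so Tonelli applies, and I should remark that the right-hand side may be infinite, in which case the identity holds in $[0,+\infty]$. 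A secondary point is to make sure $\mathcal{F}(R,E)$ itself is nonempty and genuinely partitions each ancestor cube; if $R\cap E=\emptyset$ both sides are easily seen to be zero (taking the convention that an empty family of ancestors contributes $0$), so I would dispose of that degenerate case first and then assume $R\cap E\neq\emptyset$.
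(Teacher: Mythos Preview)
Your proposal is correct and follows essentially the same route as the paper: decompose each $Q\in\mathcal{D}(R,E)$ as a disjoint union (up to measure zero, using $|\overline{E}|=0$) of the cubes $Q'\in\mathcal{F}(R,E)$ it contains, interchange the sums by nonnegativity, and identify the inner count as the number of dyadic ancestors of $Q'$ in $\mathcal{D}(R,E)$, namely $\log_2(\ell(R)/\ell(Q'))$. The paper phrases the decomposition via $|Q|=\sum_{Q'}|Q\cap Q'|$ and then argues $Q'\subset Q$ whenever the intersection is nonempty, but this is the same argument, and your treatment of the covering claim and the degenerate case $R\cap E=\emptyset$ is in fact more explicit than the paper's.
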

\begin{rem}
  The integral version  of Proposition \ref{p_id} is better known. Given a cube $R$, $\ell(R)<1$, and a closed set $E$ with $|E|=0$, one has the following formula of integration:
\[\int_{0}^{1} \frac{\mu(\lambda)}{\lambda}d\lambda = \int_{R}\log \frac{1}{dist(x,E)}dx,\]
where $\mu(\lambda)=|\{x \in R: dist(x,E)< \lambda\}| $ is the  distribution function.
\end{rem}
 \begin{rem}
 In  this paper, we refer  to  the binary logarithm  $\log x=\log_2 x$.
  \end{rem}
 The sums in (\ref{e_id}) are bounded for  a set  $E$  of finite entropy over $R$:
  \[\int_{R}\log\frac{1}{dist(x, E)}dx < \infty.\]
  It follows from the next result for sets of general type.
\begin{proposition}\label{p_en}
  Let $E \in \rd$ be a  set  such that the closure  of $E$ has zero Lebesgue measure.  Then
 for each cube $R \subset \rd$
 \begin{equation}\label{e_en}
  \left | \sum_ {Q'\in \mathcal{F}(R,E) }|Q'|\log \frac{1}{\ell(Q')} - \int_{R}\log\frac{1}{dist (x, E)}dx \right | < C |R|,
\end{equation}
 and
\begin{equation}\label{e_inf}
  \left | \log \frac{1}{\ell(M(R))} - \inf_R\log\frac{1}{dist (x, E)} \right | < C
\end{equation}
with the constant $C=C(d)$.
\end{proposition}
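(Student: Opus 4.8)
The plan is to establish (\ref{e_en}) and (\ref{e_inf}) by a cube-by-cube comparison of a dyadic quantity with the corresponding distance integral or supremum, built on three elementary facts about a dyadic cube $Q\in\mathcal D(R)$ with $Q\cap E=\emptyset$. Since then $E\subset\rd\setminus Q$, one has $dist(x,E)\ge dist(x,\partial Q)$ for $x\in Q$; in particular the center $x_Q$ of $Q$ satisfies $dist(x_Q,E)\ge dist(x_Q,\partial Q)=\ell(Q)/2$. If moreover $\pi Q\cap E\ne\emptyset$, then a point of $E$ lies in $\pi Q$, whence $dist(x,E)\le\operatorname{diam}(\pi Q)=2\sqrt d\,\ell(Q)$ for every $x\in Q$.

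For (\ref{e_en}) I would first reduce to the case $R\cap E\ne\emptyset$ (otherwise $\mathcal F(R,E)=\emptyset$, and we keep the standing hypotheses of the applications). Because $\overline E$ is null, every $x\in R\setminus\overline E$ lies in a dyadic cube missing $E$, so the maximal dyadic cubes missing $E$ --- which are exactly the members of $\mathcal F(R,E)$ --- are pairwise disjoint and cover $R$ up to the null set $\overline E$; hence $\sum_{Q'\in\mathcal F(R,E)}|Q'|=|R|$ and
\[
\int_{R}\log\frac1{dist(x,E)}\,dx=\sum_{Q'\in\mathcal F(R,E)}\int_{Q'}\log\frac1{dist(x,E)}\,dx .
\]
It then suffices to prove, and sum over $\mathcal F(R,E)$, the one-cube estimate
\[
\Bigl|\int_{Q'}\log\frac1{dist(x,E)}\,dx-|Q'|\log\frac1{\ell(Q')}\Bigr|\le C(d)\,|Q'| ;
\]
this termwise form also shows that the two sides of (\ref{e_en}) are finite or infinite together, which is the content ``for sets of general type''. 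The lower bound for the integral is immediate from $dist(x,E)\le 2\sqrt d\,\ell(Q')$ on $Q'$. For the upper bound I would use $dist(x,E)\ge dist(x,\partial Q')$ together with the single real computation of the argument: for a cube $Q$ of side $\ell$,
\[
\int_{Q}\log\frac1{dist(x,\partial Q)}\,dx\le|Q|\Bigl(\log\frac1\ell+C(d)\Bigr),
\]
which follows from the collar estimate $|\{x\in Q:dist(x,\partial Q)<t\}|\le C(d)\,\ell^{d-1}t$ for $0<t\le\ell$, inserted into the layer-cake formula for $\int_Q\log(\ell/dist(x,\partial Q))\,dx$, with the resulting integral split at the scale where the collar measure saturates $|Q|$.

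For (\ref{e_inf}), note that $\inf_R\log(1/dist(\cdot,E))=\log\bigl(1/\sup_R dist(\cdot,E)\bigr)$, so the claim amounts to $\ell(M(R))\asymp\sup_{x\in R}dist(x,E)$ with constants depending only on $d$ (again taking $R\cap E\ne\emptyset$, so $\rho:=\sup_R dist(\cdot,E)\le\operatorname{diam}R$). Evaluating $dist(\cdot,E)$ at the center of $M(R)$ gives $\rho\ge\ell(M(R))/2$. Conversely, choose $x^*\in R$ with $dist(x^*,E)>\rho/2$; then the closed ball $\overline{B(x^*,\rho/2)}$ is disjoint from $E$, and the least generation $k\ge0$ for which cubes of $\mathcal D_k(R)$ have diameter at most $\rho/2$ furnishes a cube $Q\in\mathcal D_k(R)$ with $x^*\in Q\subset\overline{B(x^*,\rho/2)}$; thus $Q\cap E=\emptyset$, while minimality of $k$ forces $\ell(Q)\ge c(d)\,\rho$, so $\ell(M(R))\ge c(d)\,\rho$. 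Taking logarithms in $c(d)\,\rho\le\ell(M(R))\le2\rho$ yields (\ref{e_inf}).

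I do not expect a serious obstacle. The only estimate requiring genuine care is the logarithmic collar bound for $dist(\cdot,\partial Q)$ --- this is where the dimensional constant $C(d)$ originates --- together with the measure-theoretic bookkeeping: deriving the a.e.\ decomposition of $R$ by $\mathcal F(R,E)$ from $|\overline E|=0$, and noting that the termwise comparison makes the two sides of (\ref{e_en}) simultaneously finite.
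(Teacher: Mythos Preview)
Your proposal is correct and follows essentially the same route as the paper: decompose $R$ by the cubes of $\mathcal F(R,E)$, bound $dist(x,E)$ on each $Q'$ from above by $2\sqrt d\,\ell(Q')$ and from below by $dist(x,\partial Q')$, and then control $\int_{Q'}\log(1/dist(x,\partial Q'))\,dx$. The only differences are cosmetic: the paper computes that last integral exactly via a simplex decomposition of the cube (its Lemma~\ref{l_ele}) rather than by your collar/layer-cake estimate, and for the bound $\sup_R dist(\cdot,E)\le C(d)\,\ell(M(R))$ it simply observes that every $x\in R$ lies in some $Q'\in\mathcal F(R,E)$ with $\ell(Q')\le\ell(M(R))$, instead of constructing a dyadic cube inside a ball as you do.
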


The original Meyer-Coifman lemma  concerns to  the Carleson embedding inequality for the dyadic covering of $\rd$.
\begin{lemma} [{\cite[p. 59, Lemma 5]{MC}}]
   Let $\mathcal{D'}\subset\mathcal{D}(\rd)$ be a  family of dyadic cubes in $\rd$ and  let $p_Q$, $Q\in  \mathcal{D'}$, be the sequence of positive numbers such that
  \[\sum_{Q\in \mathcal{D'},\: Q\subset R} p_Q\leq C|R|\]
 for each   $R \in  \mathcal{D}(\rd)$ with $C$ independent of $R$. Then for every sequence $a_Q\geq 0$, $Q \in  \mathcal{D'}$, we have
 \begin{equation} \label{e_cm}
 \sum_{Q \in \mathcal{D'}} a_Q p_Q  \leq
C\int_{\rd} \:\sup_{Q \ni x,\,Q \in \mathcal{D'}} a_Q \: dx.
 \end{equation}
\end{lemma}
Here we prove.
\begin{proposition}\label{p_carl}
  Let $E \subset\rd$ be a  set  such that closure  of $E$ has zero Lebesgue measure.
   For each cube $R \subset \rd$ let $\{a_Q\}_{\mathcal{D}(R,E)}$ be a sequence of non-negative scalars. Then there is a  constant $C$ which depends only on $d$ and $E$ such that

\[\int_R \:\sum_{Q \in \mathcal{D}(R,E)} a_Q \chi_Q \:dx \leq
\int_R \:\sup_{Q \in \mathcal{D}(R,E)} a_Q \chi_Q  \:\left(\log \frac{\ell(R)}{dist(x,E)}+C\right)dx.\]
Here $\chi_Q$ is the characteristic function of a cube $Q$.
\end{proposition}
Observe that the estimate above is sharp in a sense.
\subsection{Weakly porous sets}
\begin{df}
  Let $E$ be a nonempty set.
  A set $E\subset\mathbb{R}^d$ is called weakly porous, if there  are constants $0< c, \, \delta <1$ such that for each cube $R\subset\mathbb{R}^d$
  there exists a  finite family $\mathcal{F}(R,E, \delta)  \subset \mathcal{D}(R)$ of pairwise disjoint  dyadic cubes $Q\in \mathcal{D}(R)$  such that $Q\cap E=\emptyset$ and $|Q|\geq \delta |M(R)|$, and
   \begin{equation}\label{e_wpde}
\sum_{Q\in \mathcal{F}(R,E, \delta) }|Q|\geq c |R|.
\end{equation}
\end{df}
Instead of dyadic cubes, also general subcubes of $R$ could be used in the definition of
weak porosity. However, the dyadic formulation \cite{ALMV} is convenient from the point of view of our
proofs.

The following properties  are easy to verify using the definition of weak porosity:
\begin{enumerate}
 \item [W1.]If $E\subset \rd$ is  porous, then $E$ is weakly porous.
  \item [W2.] $E\subset \rd$ is  weakly porous if and only if the closure $E$ is weakly porous.
  \item [W3.]If $E\subset \rd$ is weakly porous, then $|E|=0$. This is a consequence of the Lebesgue
differentiation theorem.
  \item [W4.] Weak porous set  is nowhere dense.
\end{enumerate}

By the John-Nirenberg  theorem (see \cite[p.145]{S}),  there is  a condition of the weak porosity equivalent to \cite[Theorem 1.1]{ALMV}.

\begin{proposition}[{\cite {ALMV, V}}]\label{p_blo}
  A set $E \subset \rd$ is weakly porous if and only if the function
 $\log dist ^{-1}(x, E)$
 belongs to the space $BLO(\rd)$. This means  that
 there exists $C$ such that for each cube $R\subset\rd$ it holds
 \[
\frac{1}{|R|}\int_{R}\log\frac{1}{dist(x, E)}dx\leq \essinf_R \log\frac{1}{dist(x, E)}+C.
\]
\end{proposition}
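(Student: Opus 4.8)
\emph{Plan.} The strategy is to convert the $BLO$ condition for $f:=\log dist^{-1}(\cdot,E)$ into a purely dyadic ``Carleson type'' condition with the help of Propositions \ref{p_en} and \ref{p_id}, prove the two directions at that level, and finally indicate the shorter route through the Muckenhoupt characterization of \cite{ALMV}.

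First I would reduce to the dyadic statement. In both directions one has $|\overline E|=0$: if $E$ is weakly porous this is property W3 applied to $\overline E$ (using W2), and if $f\in BLO\subset L^1_{loc}$ then $dist^{-1}(\cdot,E)$ is a.e.\ finite. Hence $dist(\cdot,E)$ is continuous with zero set of measure zero, so $\operatorname*{essinf}_R f=\inf_R f$, and the cubes of $\mathcal F(R,E)$ tile $R$ up to a null set, so $\sum_{Q'\in\mathcal F(R,E)}|Q'|=|R|$ whenever $R\cap E\neq\emptyset$ (the case $R\cap E=\emptyset$ is elementary, as then $f$ has bounded mean oscillation from its infimum on $R$). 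By \eqref{e_en} and \eqref{e_inf} the two sides of the $BLO$ inequality differ by $O_d(1)$ from $\frac1{|R|}\sum_{Q'\in\mathcal F(R,E)}|Q'|\log\tfrac1{\ell(Q')}$ and $\log\tfrac1{\ell(M(R))}$ respectively; subtracting the common term $\log\tfrac1{\ell(R)}$ and applying Proposition \ref{p_id} (which also gives $\ell(Q')\le\ell(M(R))$ for every $Q'\in\mathcal F(R,E)$, since $\mathcal F(R,E)$ consists of maximal $E$-free dyadic cubes), one sees that $f\in BLO$ is equivalent to the existence of $C$ with
\[
\sum_{Q\in\mathcal D(R,E)}|Q|\ \le\ |R|\Bigl(\log_2\tfrac{\ell(R)}{\ell(M(R))}+C\Bigr)\qquad\text{for every cube }R ,
\]
equivalently $\sum_{Q'\in\mathcal F(R,E)}|Q'|\log_2\tfrac{\ell(M(R))}{\ell(Q')}\le C|R|$ for every $R$.

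For the implication $(BLO)\Rightarrow$ (weakly porous) this reformulation makes the matter easy. Given the last inequality, by Chebyshev the cubes $Q'\in\mathcal F(R,E)$ with $\log_2\tfrac{\ell(M(R))}{\ell(Q')}>m$ have total volume at most $\tfrac Cm|R|$, so as soon as $m\ge 2C$ the cubes $Q'\in\mathcal F(R,E)$ with $\ell(Q')\ge 2^{-m}\ell(M(R))$ form a finite, pairwise disjoint, $E$-free family of total volume $\ge\tfrac12|R|$ with $|Q'|\ge 2^{-md}|M(R)|$; this is weak porosity with $c=\tfrac12$, $\delta=2^{-md}$. For the converse, (weakly porous)$\Rightarrow(BLO)$, I would iterate the covering. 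Put $m=\lceil\log_2\delta^{-1/d}\rceil$, define $k(Q)$ by $\ell(M(Q))=2^{-k(Q)}\ell(Q)$, set $\mathcal G_0=\{R\}$, and given $\mathcal G_j$ replace each $Q\in\mathcal G_j$ with $Q\cap E\neq\emptyset$ by the subcubes of $Q$ of relative level $k(Q)+m$ still meeting $E$; weak porosity applied to $Q$ says the $E$-free cubes of relative level $\le k(Q)+m$ cover at least $c|Q|$, so $\sum_{Q\in\mathcal G_{j+1}}|Q|\le(1-c)\sum_{Q\in\mathcal G_j}|Q|$ and hence $\sum_{Q\in\bigcup_j\mathcal G_j}|Q|\le c^{-1}|R|$. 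Every cube of $\mathcal D(R,E)$ lies, in the tree $\bigcup_j\mathcal G_j$, between a uniquely determined $Q\in\mathcal G_j$ and its children, and that ``layer'' contributes at most $(k(Q)+m)|Q|$ to $\sum_{Q\in\mathcal D(R,E)}|Q|$, whence $\sum_{Q\in\mathcal D(R,E)}|Q|\le\sum_j\sum_{Q\in\mathcal G_j}(k(Q)+m)|Q|$. \emph{The main obstacle is exactly the term $\sum_{Q\in\mathcal G_j}k(Q)|Q|$:} unlike in the porous case of \cite{BNT}, the number $k(Q)$ of ``already full'' scales inside $Q$ is not bounded, so one cannot simply descend a fixed number of levels and lose a fixed fraction; for $j=0$ this term is the admissible main term $\log_2\tfrac{\ell(R)}{\ell(M(R))}|R|$, and for $j\ge1$ the full scales of each $Q\in\mathcal G_j$ must be charged against the weak‑porosity covering of the parent of $Q$ in $\bigcup_j\mathcal G_j$ and summed by telescoping, which is the technical heart of the argument and produces the boxed inequality.

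Finally, both implications can also be obtained at once by combining \cite[Theorem~1.1]{ALMV} — $E$ is weakly porous iff $dist(\cdot,E)^{-\alpha}\in A_1(\rd)$ for some $\alpha>0$ — with the John–Nirenberg inequality: $A_1$-membership of $dist(\cdot,E)^{-\alpha}$ yields, by Jensen's inequality, the $BLO$ estimate for $\alpha^{-1}\log dist(\cdot,E)^{-\alpha}=\log dist^{-1}(\cdot,E)$, while conversely $\log dist^{-1}(\cdot,E)\in BLO$ makes $\alpha\log dist^{-1}(\cdot,E)$ of arbitrarily small $BLO$-norm for small $\alpha>0$, so that $dist(\cdot,E)^{-\alpha}=\exp\!\bigl(\alpha\log dist^{-1}(\cdot,E)\bigr)\in A_1$; see also \cite{V} for $d=1$.
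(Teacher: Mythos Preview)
The paper does not actually prove this proposition: it is quoted from \cite{ALMV,V}, and the single sentence preceding it (``By the John--Nirenberg theorem \ldots equivalent to \cite[Theorem~1.1]{ALMV}'') is the entire argument offered. Your final paragraph reproduces precisely that route --- weakly porous $\Leftrightarrow dist(\cdot,E)^{-\alpha}\in A_1$ for some $\alpha>0$ by \cite{ALMV}, and $A_1\Leftrightarrow BLO$ via Jensen and John--Nirenberg --- so on that count your proposal matches the paper.

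Your longer, self-contained approach is genuinely different and in fact more ambitious than the paper: you use Propositions~\ref{p_id} and~\ref{p_en} to reduce the $BLO$ condition to the dyadic inequality (this is what the paper does in Theorem~\ref{t_main}, but there it \emph{uses} Proposition~\ref{p_blo} as input), and then try to prove ``weakly porous $\Leftrightarrow$ dyadic inequality'' directly. The direction $BLO\Rightarrow$ weakly porous via Chebyshev on $\sum_{Q'\in\mathcal F(R,E)}|Q'|\log_2\tfrac{\ell(M(R))}{\ell(Q')}\le C|R|$ is clean and correct.

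The converse direction, however, is only a sketch with a real gap. You yourself flag the term $\sum_{j\ge1}\sum_{Q\in\mathcal G_j}k(Q)|Q|$ as ``the main obstacle'' and say it ``must be charged against the weak-porosity covering of the parent \ldots and summed by telescoping'', but you do not carry this out, and it is not clear what exactly telescopes: for $Q\in\mathcal G_j$ with parent $P\in\mathcal G_{j-1}$, the number $k(Q)$ is not controlled by $k(P)$ or by the covering $\mathcal F(P,E,\delta)$ (weak porosity gives no upper bound on how small $M(Q)$ can be relative to $Q$), so the geometric decay $\sum_{Q\in\mathcal G_j}|Q|\le(1-c)^j|R|$ alone does not absorb the factors $k(Q)$. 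If you want a proof independent of \cite{ALMV}, this step must be made precise; otherwise, your last paragraph already suffices and is what the paper invokes.
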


\subsection{Main results for weakly porous sets.}
 As application of Proposition \ref{p_id}, we give the  following characterization of  weakly porous sets by means of dyadic coverings.

On  the one side we prove the  discrete version of the BLO condition from Proposition \ref{p_blo}, which involves the dyadic covering of complement of a weakly porous set.

On the other  side we obtain  the   description  by means of the  dyadic covering of  the weakly porous sets themselves.

\begin{theorem}\label{t_main}
A set  $E\subset\mathbb{R}^d$ is weakly porous if and only if there is a constant $C$ such that for each cube
$R\subset\mathbb{R}^d$ any of the following properties hold.

\begin{enumerate}
 \item[(i)]  For the family $\mathcal{F}(R,E)$ of dyadic cubes it holds
  \[
 \sum_{Q' \in \mathcal{F}(R,E)}|Q'| \log \frac{1}{\ell(Q')}\leq \left( \log \frac{1}{\ell(M(R))} + C\right) |R|.
\]

  \item[(ii)]  For the family $\mathcal{D}(R,E)$ of dyadic cubes it holds
 \[
\sum_{Q \in \mathcal{D}(R,E)}|Q|\leq \left(\log \frac{\ell (R)}{\ell (M(R))}+C \right)|R|.
\]

 \item[(iii)]  Let
  \[\mathcal{D}_0 (R, E)=\{Q \in \mathcal{D}(R,E):\;   |Q|\leq |M(R)| \},\]
then
  \[
\sum_{Q \in\mathcal{D}_0 (R,E) }|Q|\leq C |R|.
\]

\end{enumerate}
\end{theorem}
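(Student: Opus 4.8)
The plan is to prove the cyclic chain of implications: weak porosity $\Rightarrow$ (i) $\Rightarrow$ (ii) $\Rightarrow$ (iii) $\Rightarrow$ weak porosity, using Propositions \ref{p_id}, \ref{p_en} and \ref{p_blo} as the main tools. The key observation linking (i) and (ii) is that Proposition \ref{p_id} gives the exact identity
\[
\sum_{Q\in\mathcal{D}(R,E)}|Q| \;=\; \sum_{Q'\in\mathcal{F}(R,E)}|Q'|\log_2\frac{\ell(R)}{\ell(Q')},
\]
so after converting between $\log_2$ and $\log$ (a harmless dimensional-constant factor absorbed into $C$), the sum in (ii) equals $(\log\tfrac1{e})$-rescaled version of the sum in (i) \emph{plus} the term $\bigl(\log_2\ell(R)\bigr)\sum_{Q'}|Q'|$; but $\sum_{Q'\in\mathcal{F}(R,E)}|Q'|=|R|$ since the maximal $E$-free cubes tile $R$ up to the measure-zero set $E$ (property W3). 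Hence (i) and (ii) are \emph{literally the same inequality} rewritten, modulo the base change of the logarithm — this is the cleanest direction and I would present it first as a standalone lemma-like remark.

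First I would establish weak porosity $\Rightarrow$ (i). By Proposition \ref{p_en}, the left side of (i) differs from $\int_R\log\frac1{dist(x,E)}\,dx$ by at most $C|R|$, and $\log\frac1{\ell(M(R))}$ differs from $\inf_R\log\frac1{dist(x,E)}$ by at most $C$; since $E$ has measure zero the essential infimum equals the infimum. So (i) is equivalent (up to adjusting $C$) to the BLO inequality of Proposition \ref{p_blo}, which holds precisely because $E$ is weakly porous. This simultaneously gives weak porosity $\Rightarrow$ (i) \emph{and} (i) $\Rightarrow$ weak porosity, so in fact (i) and (ii) are each equivalent to weak porosity via Proposition \ref{p_blo}, and the only real work left is to fold (iii) into the chain.

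For (ii) $\Rightarrow$ (iii): split $\mathcal{D}(R,E)=\mathcal{D}_0(R,E)\cup\{Q\in\mathcal{D}(R,E):|Q|>|M(R)|\}$. The second family consists of dyadic cubes of side-length strictly between $\ell(M(R))$ and $\ell(R)$ that meet $E$; at each scale $2^{-j}\ell(R)$ with $2^{-j}\ell(R)>\ell(M(R))$ there are at most $2^{jd}$ such cubes, but more usefully the cubes at a fixed scale that meet $E$ are pairwise disjoint subsets of $R$, so their total measure is at most $|R|$; summing over the $\log_2(\ell(R)/\ell(M(R)))$ relevant scales gives that the ``large'' part of $\sum_{Q\in\mathcal{D}(R,E)}|Q|$ is at least — wait, I need a lower bound here, so instead I note that the large-cube part is at least $|M(R)|\cdot(\text{something})$; more cleanly, I bound the large part from below by observing every scale above $\ell(M(R))$ contributes at least one cube meeting $E$ of measure $\geq |M(R)|$ times a power of $2$, and in fact summing the measures of all dyadic cubes of side $>\ell(M(R))$ meeting $E$ is $\geq (\log_2\tfrac{\ell(R)}{\ell(M(R))} - C)\,|R|$ by a counting argument on the ``parent tree'' — then subtracting this from (ii) leaves $\sum_{\mathcal{D}_0}|Q|\leq C|R|$. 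The reverse (iii) $\Rightarrow$ weak porosity runs by contradiction: if $E$ is not weakly porous, Proposition \ref{p_blo} fails, so by the equivalences already proven (ii) fails, hence $\sum_{\mathcal{D}_0}|Q|$ must be unbounded too after accounting for the controlled large-cube part.

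The main obstacle is the careful two-sided estimate of the ``large cube'' contribution $\sum\{|Q|:Q\in\mathcal{D}(R,E),\ |Q|>|M(R)|\}$, i.e.\ showing it is comparable to $\log_2\frac{\ell(R)}{\ell(M(R))}\cdot|R|$ up to an additive $C|R|$ — the upper bound (each scale contributes $\leq|R|$) is trivial, but the matching lower bound requires that at \emph{most} scales above $\ell(M(R))$ a \emph{positive fraction} of $R$ is covered by $E$-meeting cubes, which is not obvious for arbitrary $E$ and is really where the hypothesis that $M(R)$ is a \emph{largest} $E$-free dyadic cube gets used (it forces every dyadic cube of side $>\ell(M(R))$ to meet $E$, so \emph{all} $2^{jd}$ cubes at scale $2^{-j}\ell(R)$ meet $E$ whenever $2^{-j}\ell(R)>\ell(M(R))$, giving the full $|R|$ at each such scale for free). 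Once that is recognized the lower bound becomes exact: $\sum\{|Q|:|Q|>|M(R)|\}=\sum_{j:2^{-j}\ell(R)>\ell(M(R))}|R| = \lceil\log_2\frac{\ell(R)}{\ell(M(R))}\rceil|R|$, and (ii) $\Leftrightarrow$ (iii) follows immediately, closing the cycle.
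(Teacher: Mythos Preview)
Your proposal is correct and tracks the paper's proof closely. Both you and the paper establish weak porosity $\Leftrightarrow$ (i) via Propositions~\ref{p_en} and~\ref{p_blo}, and (i) $\Leftrightarrow$ (ii) via the identity of Proposition~\ref{p_id} together with $\sum_{Q'\in\mathcal{F}(R,E)}|Q'|=|R|$. The only genuine difference is in the treatment of (iii): the paper re-runs the double-counting argument of Proposition~\ref{p_id} with $\mathcal{D}_0(R,E)$ in place of $\mathcal{D}(R,E)$ to obtain directly
\[
\sum_{Q\in\mathcal{D}_0(R,E)}|Q|=\sum_{Q'\in\mathcal{F}(R,E)}|Q'|\log_2\frac{\ell(M(R))}{\ell(Q')},
\]
which is manifestly (i) rewritten; you instead compute the ``large-cube'' part $\sum\{|Q|:Q\in\mathcal{D}(R,E),\ |Q|>|M(R)|\}$ explicitly, using that maximality of $M(R)$ forces \emph{every} dyadic cube of side $>\ell(M(R))$ to meet $E$, hence this sum equals $|R|\log_2\tfrac{\ell(R)}{\ell(M(R))}$ exactly. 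Subtracting your identity from Proposition~\ref{p_id} recovers the paper's identity and vice versa, so the two routes are equivalent. One caution: your remark that the $\log$/$\log_2$ conversion is ``a harmless dimensional-constant factor absorbed into $C$'' is fine for (i) $\Leftrightarrow$ (iii) (where the logarithmic term is itself bounded by $C$), but is not literally an additive absorption for (i) $\Leftrightarrow$ (ii), where the term $\log\tfrac{\ell(R)}{\ell(M(R))}$ can be arbitrarily large; the paper is equally casual on this point.
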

\begin{rem}
  Note that in all properties (i)--(iii) we have the same constant $C$.
\end{rem}
\subsection{Main results for porous sets.}
For the porous sets, when  $|R|\leq C |M(R)|$  with  $C$ independent of  $R$,  we have an extension to $\rd$  of  the   results from \cite{BNT, D} and
application of  \cite[p. 59]{MC}.

\begin{theorem} \label{cor3}
A set  $E\subset\mathbb{R}^d$ is porous if and only if there is a constant $C$ such that for each cube
$R\subset\mathbb{R}^d$ any of the following properties hold.
\begin{enumerate}
   \item[(i)] The Dyn'kin inequality: for each cube  $R\subset\mathbb{R}^d$, one has
 \[
\sum_{Q' \in \mathcal{F}(R,E)}|Q'|\log \frac{1}{\ell(Q')}\leq |R|\left( \log \frac{1}{\ell(R)} + C\right).
\]

   \item[(ii)] The Borichev-Nicolau-Thomas characterization: for each cube  $R\subset\mathbb{R}^d$ one has
    \[
\sum_{Q \in \mathcal{D}(R,E)}|Q|\leq C|R|,
\]
which is the  Carleson measure property
\item[(iii)]
The  Carleson embedding inequality: let $1\leq p< \infty$  and $\{a_Q\}_{\mathcal{D}(R,E)}$ be a sequence of non-negative scalars. Then there is the constant $C$ which depends only on $d$, $p$ and $E$ such that
\[\left \| \left(\sum_{Q \in \mathcal{D}(R,E)} a^p_Q \chi_Q \right )^{1/p}\right\|_p \leq C
\left \|\sup_{Q \in \mathcal{D}(R,E)} a_Q \chi_Q \right\|_p. \]
Here $\chi_Q$ is the characteristic function of a cube $Q$, and
\[\|f\|_p=\left(\int_{\rd}|f(x)|^pdx\right)^{1/p}.\]

\end{enumerate}
\end{theorem}
Observe that (iii) with maximal-function techniques implies the following inequality, which is related to   \cite[Theorem 2.10]{IV}. 
\begin{corollary} \label{cor4}
Let a set  $E\subset\mathbb{R}^d$ be porous,  $1\leq p< \infty$, $1\leq q\leq \infty$  and let $\{a_Q\}_{\mathcal{D}(R,E)}$ be a sequence of non-negative scalars. Then there is the constant $C$ which depends only on $d, \,p$ and $E$ such that

\[\left \| \sum_{Q \in \mathcal{D}(R,E)} a_Q \chi_Q  \right\|_p \leq C
\left \|\left (\sum_{Q \in \mathcal{D}(R,E)} a^q_Q \chi_Q \right )^{1/q} \right\|_p .\]
\end{corollary}
 \section{Proofs}

\subsection{Proof of Proposition \ref{p_id}}

  The underlying ideas are in principle similar to
those in  \cite{BNT}, but the higher dimensional case requires several nontrivial modifications.

 Cubes of the family $\mathcal{F}(R,E)$ are pairwise disjoint and
 \[\bigcup_{Q' \in \mathcal{F}(R,E)} Q'  \supseteq  R \backslash \overline{E}\]
   for any cube $R$.  Since
 $|\overline{E}|=0$, it holds that
 \[
 \sum_ {Q'\in \mathcal{F}(R,E)}|Q'|=|R|.
 \]
So, for   the left hand side of  (\ref{e_id}) we have, by changing the summation  order,
\[
\aligned
\sum_{Q \in \mathcal{D}(R,E)} |Q|
&= \sum_ {Q\in \mathcal{D}(R,E) }\; \sum_{ Q' \in \mathcal{F}(R,E)} |Q\cap Q'| \\
&= \sum_ {Q'\in \mathcal{F}(R,E) }\; \sum_{ Q \in \mathcal{D}(R,E)} |Q\cap Q'| .
\endaligned
\]
If $Q, Q' \in \mathcal{D}(R)$ and $Q'\cap Q \neq \emptyset$, then  $Q\subset Q'$ or $Q'\subset Q$.
Since $Q\cap E\neq \emptyset$ and $Q'\cap E=\emptyset$, it holds $Q'\subset Q$ and so
$|Q'\cap Q|=|Q'|$.
 Therefore,
\[
\aligned
\sum_ {Q\in \mathcal{D}(R,E)}|Q|
&=\sum_ {Q'\in \mathcal{F}(R,E)} \quad\sum_{ Q \in \mathcal{D}(R,E),\: Q\supset Q'} |Q'| \\
&=\sum_ {Q'\in \mathcal{F}(R,E)} |Q'| \quad\sum_{ Q \in \mathcal{D}(R,E),\: Q\supset Q'} 1 \\
&= \sum_ {Q'\in \mathcal{F}(R,E) }|Q'| \;\mathcal{N} (Q \in \mathcal{D}(R,E),\: Q\supset Q'),
\endaligned
\]
where
$\mathcal{N} (Q \in \mathcal{D}(R,E),\: Q\supset Q')$ is a number of cubes from $\mathcal{D}(E,R) $ containing $Q'$. This equals to  a  number  of embedded dyadic cubes  in the sequence
\[
Q\subset \pi Q\subset \dots \pi^n Q,
\]
where the largest cube is $\pi^n Q = R$ and  the smallest cube is $Q=\pi Q' $. Indeed, for each cube  $Q' \in \mathcal{F}(R,E)$,   it holds  $\pi Q' \cap E \neq \emptyset$, and hence $Q=\pi Q' \in \mathcal{D}(R,E) $. Therefore,

 \[
 \mathcal{N} (Q \in \mathcal{D}(E,R), Q\supset Q') =\log \frac{\ell(R)}{\ell( Q')},
 \]
as required.
\subsection{Proof of Proposition \ref{p_en}} We need an elementary computational lemma.
\begin{lemma}\label{l_ele}
Let $Q \subset \rd$ be a cube with side-length $\ell(Q)$ and Lebesgue measure $|Q|$, and the boundary $\partial Q$. Then
\[ \frac{1}{|Q|}\int_{Q} \log \frac{1}{dist (x, \partial Q)}dx= \log \frac{1}{\ell(Q)} +C\]
with the  constant $C=C(d)$.
\end {lemma}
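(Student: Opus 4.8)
The plan is to reduce the statement to a single scaling/translation-normalized integral that depends only on the dimension. First I would observe that both sides of the claimed identity are invariant under translations, and under scaling $x \mapsto tx$ the left-hand side transforms by $\frac{1}{|tQ|}\int_{tQ}\log\frac{1}{\mathrm{dist}(x,\partial(tQ))}\,dx = \frac{1}{|Q|}\int_Q \log\frac{1}{t\,\mathrm{dist}(y,\partial Q)}\,dy = \bigl(\frac{1}{|Q|}\int_Q \log\frac{1}{\mathrm{dist}(y,\partial Q)}\,dy\bigr) + \log\frac{1}{t}$, so it suffices to prove the identity for the unit cube $Q_0 = [0,1)^d$, with $C(d) := \frac{1}{|Q_0|}\int_{Q_0}\log\frac{1}{\mathrm{dist}(x,\partial Q_0)}\,dx$. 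For a general $Q$ with side-length $\ell(Q)$, writing $Q = x_0 + \ell(Q)Q_0$ gives exactly $\frac{1}{|Q|}\int_Q \log\frac{1}{\mathrm{dist}(x,\partial Q)}\,dx = C(d) + \log\frac{1}{\ell(Q)}$, which is the assertion.

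Next I would check that $C(d)$ is in fact a finite real number, i.e. that the integral over $Q_0$ converges. For $x \in Q_0$ one has $\mathrm{dist}(x,\partial Q_0) = \min_{1\le i\le d}\min\{x_i, 1-x_i\}$, a quantity bounded above by $\tfrac12$, so the integrand $\log\frac{1}{\mathrm{dist}(x,\partial Q_0)}$ is nonnegative. Its only singularity is near $\partial Q_0$, where, by the union bound, $|\{x\in Q_0 : \mathrm{dist}(x,\partial Q_0) < \lambda\}| \le 2d\lambda$ for small $\lambda$; hence by the layer-cake formula $\int_{Q_0}\log\frac{1}{\mathrm{dist}(x,\partial Q_0)}\,dx = \int_0^{1/2}\frac{|\{x : \mathrm{dist}(x,\partial Q_0)<\lambda\}|}{\lambda}\,d\lambda + (\text{a finite tail term})$, and the integrand $\le 2d$ near $0$, so the integral is finite. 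This pins down $C(d)$ as a well-defined constant depending only on $d$.

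For completeness I would also give the alternative direct evaluation, which makes the dependence on $d$ explicit: split $Q_0$ into the $2^d d$ congruent regions according to which coordinate and which face realizes the minimum; on the representative region $\{x : x_1 = \mathrm{dist}(x,\partial Q_0)\}$ one can, after discarding a measure-zero overlap, change variables to compute $\int \log\frac{1}{x_1}\,dx$ over that simplex-like region in closed form, yielding a rational-plus-harmonic-number expression. I would not carry this computation out in detail, since only finiteness and the scaling behavior are needed downstream, but it confirms $C(d) = C(d)$ is genuinely a constant.

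The only real subtlety — hardly an obstacle — is the scaling bookkeeping for the logarithm: one must be careful that the additive constant emerging from $\log(t\cdot)=\log t + \log(\cdot)$ is tracked with the correct sign and that it combines cleanly with the $-\log\ell(Q)$ term rather than against it. Everything else is a routine change of variables together with the elementary convergence estimate above. I expect the write-up to be short: normalize, scale, invoke layer-cake for finiteness, and read off the constant.
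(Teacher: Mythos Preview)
Your argument is correct and complete: the scaling change of variables shows that the average over any cube differs from the average over the unit cube by exactly $\log(1/\ell(Q))$, and the layer-cake bound $|\{x\in Q_0:\mathrm{dist}(x,\partial Q_0)<\lambda\}|=1-(1-2\lambda)^d\le 2d\lambda$ makes the defining integral for $C(d)$ finite. That is all the lemma claims.

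The paper proceeds differently. Rather than arguing by scaling and an abstract finiteness estimate, it decomposes the cube into $2^d d!$ congruent simplexes obtained by repeatedly coning off faces over their centers; on each simplex the distance to $\partial Q$ is a single coordinate, and an iterated one-variable integral gives the explicit value
\[
C(d)=\log 2+\sum_{k=1}^d \frac{1}{k}.
\]
Your approach is shorter and isolates exactly the mechanism (homogeneity of the distance function under dilation) that forces the constant to depend only on $d$; the paper's approach is more computational but yields the exact constant. Since only the existence of a dimensional constant is used downstream (in the proof of Proposition~\ref{p_en}), your route loses nothing for the application. One small slip in your aside: the natural ``which face is nearest'' decomposition gives $2d$ congruent pyramids (or, after further subdivision, $2^d d!$ simplexes as in the paper), not $2^d d$ pieces; but as you note, that computation is not needed.
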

\begin{proof} By scaling and translation invariance one can reduce  the lemma  to the case of the fixed cube $Q_0=[0;1)\times \dots \times[0;1)$. Then one can apply Proposition \ref{p_blo} with $R=Q_0$ and $E=\partial Q_0$ to obtain the result.

Alternatively, by elementary calculations, we have
\[
\aligned
 \frac{1}{|Q|}\int_{Q} \log \frac{1}{dist (x, \partial Q)}dx
 &=\frac{1}{|Q|}\int_{Q_0} \log \frac{1}{\ell\: dist (x, \partial Q_0)}\ell^d d x\\
 &= \log \frac{1}{\ell}+ \int_{Q_0} \log \frac{1}{dist (x, \partial Q_0)}dx\\
 &= \log \frac{1}{\ell}+C.
\endaligned
\]
\end{proof}

Now, we prove (\ref{e_en}).
Observe that
  for each cube $Q'\in \mathcal{F}(R,E) $ and $x \in Q'$ one has
 \[dist (x,E) \leq 2 \sqrt{d}\ell(Q'),\]
 hence
\begin{equation}\label{e_11}
  \log \frac{1}{\ell(Q')}\leq  \log \frac{2 \sqrt{d}}{dist (x,E)}.
\end{equation}
  Therefore, since $|\overline{E}|=0$, it holds
 \[
 \aligned
 \sum_{Q' \in \mathcal{F}(R,E)}|Q'| \log \frac{1}{\ell(Q')} &\leq \sum_{Q' \in \mathcal{F}(R,E)}\int_{Q'} \log \frac{2 \sqrt{d}}{dist (x,E)}dx\\
 & \leq \int_{R} \log \frac{2 \sqrt{d}}{dist (x,E)}dx\\
  & = \int_{R} \log \frac{1}{dist (x,E)}dx + C |R|
 \endaligned
 \]
with $C= \log 2 \sqrt{d}$.

 Conversely,   we have
\[
 \aligned
 \int_{R} \log \frac{1}{dist (x,E)}dx  & =\sum_{Q' \in \mathcal{F}(R,E)}\int_{Q'} \log \frac{1}{dist (x,E)}dx\\
 & \leq \sum_{Q' \in \mathcal{F}(R,E)}\int_{Q'} \log \frac{1}{dist (x, \partial Q')}dx,
 \endaligned
 \]
where $\partial Q'$ is the boundary of a cube $Q'$. Lemma \ref{l_ele} completes the proof of (\ref{e_en}).

To prove (\ref{e_inf}), we argue as in  the proof of (\ref{e_en}). So, by (\ref{e_11}), for each cube $Q'\in \mathcal{F}(R,E) $ and $x \in Q'$, one has
\[
 \aligned
\log \frac{1}{\ell(M(R))} &\leq  \log \frac{1}{\ell(Q')}\\
& \leq  \log \frac{2 \sqrt{d}}{dist (x,E)},
\endaligned
 \]
which follows
\[
 \log \frac{1}{\ell(M(R))} \leq \inf_R \log \frac{2 \sqrt{d}}{ dist(x,E)}.
 \]
On the other hand,
\[
\aligned
 \sup_{R} dist (x,  E) &\geq \sup_{M(R)} dist (x,E)\\
& \geq \frac{\ell(M(R))}{2},
\endaligned
 \]
therefore,
\[
 \log \frac{1}{2 \sqrt{d} \ell(M(R))} \leq \inf_R\log \frac{1}{dist(x,E)}\leq \log \frac{2}{\ell(M(R))},
 \]
which easily gets  (\ref{e_inf}).
\subsection{Proof of Proposition \ref{p_carl}}
First, we observe the sharpness of Proposition \ref{p_carl}. Put $a_Q=1$ if $Q \in  \mathcal{D}(R,E)$, then by  Proposition \ref{p_en}
and  by Proposition \ref{p_id},
 we obtain for these $a_Q$ the inverse inequality
\[
\aligned
\int_R \:\sup_{Q \in \mathcal{D}(R,E)}  \chi_Q  \:\log \frac{\ell(R)}{dist(x,E)}dx
&=\int_R \:\log \frac{\ell(R)}{dist(x,E)}dx\\
&\leq \sum_{Q' \in \mathcal{F}(R,E)}|Q'|\: (\log \frac{ \ell(R)}{\ell(Q')}+C)\\
&=  \sum_{Q \in \mathcal{D}(R,E)} |Q| +C|R|\\
&=  \int_R \;\sum_{Q \in \mathcal{D}(R,E)}  \chi_Q \:dx +C|R|
 \endaligned
 \]
 with  $C=C(E)$.
 Thus, we have
\[ \int_R \:\sup_{Q \in \mathcal{D}(R,E)}  \chi_Q  \:\left(\log \frac{\ell(R)}{dist(x,E)}-C\right) dx \leq \int_R \;\sum_{Q \in \mathcal{D}(R,E)}  \chi_Q \:dx.\]

In the forward direction  Proposition \ref{p_id} together with
dyadic arguments \cite[p. 59, Lemma 5]{MC} imply  Proposition \ref{p_carl}.
Indeed,
let $\chi_{(0< t< a_Q)}$  be the characteristic function, then
\[
 \aligned
 \int_R \:\sum_{Q \in \mathcal{D}(R,E)} a_Q \chi_Q \:dx
 &= \sum_{Q \in \mathcal{D}(R,E)} a_Q |Q| \\
 & = \int_0^\infty \sum_{Q \in \mathcal{D}(R,E)} |Q| \chi_{(0< t< a_Q)} dt.
 \endaligned
 \]
 The set
 \[\Omega_t= \{x:\:\sup_{Q \in \mathcal{D}(R,E)} a_Q \chi_Q >t   \}\]
 can be expressed as the union of all the cubes $Q$ such that $a_Q>t$. Let  $Q_k$ denote the maximal dyadic cube contained in $\Omega_t$,
then
$\Omega_t$ is the union of all the maximal cubes $Q_k$, and for $t>0$ we get
\[
 \aligned
  \sum_{Q \in \mathcal{D}(R,E)} |Q| \chi_{(0< t< a_Q)}
  & \leq \sum_{Q \in \Omega_t} |Q|\\
  & = \sum_k \sum_{Q \in Q_k} |Q|.
 \endaligned
 \]
To this end, we followed \cite[p. 59, Lemma 5]{MC}. Now, using Proposition \ref{p_id},  the property  $|Q_k|\leq|R|$, and  Proposition \ref{p_en}, we get
 \[
  \aligned
  \sum_{Q \in Q_k}|Q|
  &= \sum_ {Q'\in \mathcal{F}(Q_k,E) }|Q'| \log \frac{\ell(Q_k)}{\ell(Q')}\\
  &\leq \sum_ {Q'\in \mathcal{F}(Q_k,E) }|Q'| \log \frac{\ell(R)}{\ell(Q')}\\
  &\leq \int_{Q_k}\:\left( \log \frac{\ell(R)}{dist(x,E)}+C\right)dx.
    \endaligned
  \]
 Therefore, we obtain the estimate on the double sum
  \[
 \aligned
    \sum_k \sum_{Q \in Q_k} |Q|
    & \leq\sum_k  \int_{Q_k}\:\left( \log \frac{\ell(R)}{dist(x,E)}+C\right)dx\\
    & =\int_{\Omega_t}\:\left( \log \frac{\ell(R)}{dist(x,E)}+C\right) dx.
 \endaligned
 \]
 We finish by observing
 \[
   \int_0^\infty dt \int_{\Omega_t}\: \left( \log \frac{\ell(R)}{dist(x,E)}+C\right)dx =
 \int_R \:\sup_{Q \in \mathcal{D}(R,E)} a_Q \chi_Q  \:\left( \log \frac{\ell(R)}{dist(x,E)}+C\right)dx.
  \]

\subsection{Proof of Theorem \ref{t_main}} By  Proposition   \ref{p_en} and  Proposition   \ref{p_blo},  the weak porosity of $E$  is equivalent to (i).

(i) $\Leftrightarrow $ (ii) easily follows from  Proposition \ref{p_id}.

(i) $\Leftrightarrow $ (iii) Arguing as  in   Proposition \ref{p_id}, replace the family $\mathcal{D}(R, E)$ by the  family
 $\mathcal{D}_0 (R, E)$. It holds
\[
\aligned
\sum_{Q \in \mathcal{D}_0(R,E)} |Q|
& = \sum_ {Q\in \mathcal{D}_0 (R,E) } \quad \sum_{ Q' \in \mathcal{F}(R,E)} |Q\cap Q'| \\
& = \sum_ {Q'\in \mathcal{F}(R,E) } \quad \sum_{ Q \in \mathcal{D}_0(R,E)} |Q\cap Q'| \\
&=\sum_ {Q'\in \mathcal{F}(R,E)}\quad \sum_{ Q \in \mathcal{D}_0(R,E),\; Q\supset Q'} |Q'| \\
&= \sum_ {Q'\in \mathcal{F}(R,E) }|Q'| \,\mathcal{N} (Q \in \mathcal{D}_0(R,E), \;Q\supset Q'),
\endaligned
\]
where $\mathcal{N} (Q \in \mathcal{D}_0(R,E), \;Q\supset Q')$  is the number of cubes from $\mathcal{D}_0(R,E)$ in the  sequence
$ Q\subset \pi Q\subset \dots \pi^n Q$. Here $\ell(\pi^n Q) =\ell( M(R))$ and $Q=\pi Q'$, that is,
\[
\mathcal{N} (Q \in \mathcal{D}_0(R,E), \;Q\supset Q') = \log \frac{\ell(M(R))}{\ell(Q')}.
\]
Therefore,
\[
\sum_{Q \in \mathcal{D}_0(R,E)} |Q|= \sum_ {Q'\in \mathcal{F}(R,E) }|Q'| \log \frac{\ell(M(R))}{\ell(Q')} .
\]
Since both  sums above are bounded by $C|R|$  at the same time,  one gets (i) $\Leftrightarrow $ (iii), which
  completes the proof of Theorem \ref{t_main}.

\subsection{Proof of Theorem \ref{cor3}. } By Proposition \ref{p_id}, the porosity condition equivalent  (i) and (ii).

 We  prove  that (ii) and (iii) are equivalent.
 Testing inequality (iii) with $p = 1$  and the choice $a_Q=1$  if $Q \in  \mathcal{D}(R,E)$, implies
 (ii).

 Conversely,
if $E$ is porous, then property (ii)  together  with \cite[p. 59, Lemma 5]{MC} already cited imply (iii).
Indeed, after redefinitions we may assume that $p=1$.
Arguing and using the notions as  in   Proposition \ref{p_carl}, we write
\[
 \aligned
 \int_R \:\sum_{Q \in \mathcal{D}(R,E)} a_Q \chi_Q \:dx
  & = \int_0^\infty \sum_{Q \in \mathcal{D}(R,E)} |Q| \chi_{(0< t< a_Q)} dt\\
   & \leq \int_0^\infty   \sum_{Q \in \Omega_t} |Q| dt\\
  & =  \int_0^\infty \sum_k \sum_{Q \in Q_k} |Q|dt,
 \endaligned
 \]
where
 $\Omega_t= \{x:\:\sup_{Q \in \mathcal{D}(R,E)} a_Q \chi_Q >t   \}$
 is expressed as the union of all the maximal cubes $Q_k$  contained in $\Omega_t$.
Now, we use (ii)  to estimate the double sum
\[\sum_k \sum_{Q \in Q_k} |Q|\leq C \sum_k |Q_k|= C|\Omega_t|,\]
 and  observing
 \[
 \aligned
 \int_0^\infty  |\Omega_t|dt
&= \int_R \:\sup_{Q \in \mathcal{D}(R,E)} a_Q \chi_Q  \:dx,
  \endaligned
  \]
 we complete the proof.
\subsection{Proof of Corollary \ref{cor4}.}
If $p=1$ this is obvious consequence of (iii),  Theorem \ref{cor3}.

If $p>1$, it suffices to prove for $q=\infty$. We follow     \cite[Theorem 2.10]{IV} and prove  by duality
\[
  I:=\int_R \left|\sum_{Q \in \mathcal{D}(R,E)} a_Q \chi_Q (x) \psi(x)\right| dx \leq C
  \left \|\sup_{Q \in \mathcal{D}(R,E)} a_Q \chi_Q  \right\|_p 
\]
for every $\psi \in C^\infty (R)$ such that $\|\psi\|_{L^{p'}(R)}=1$, $1/p+1/{p'}=1$. Let $\psi$ be such a test function. Then 
\[
 \aligned
  I 
  &\leq \sum_{Q \in \mathcal{D}(R,E)} a_Q \int_Q |\psi|(x)|dx\\
  &=\sum_{Q \in \mathcal{D}(R,E)} a_Q  |\psi|_Q |Q|\\
  \endaligned  
\]
where $|\psi|_Q$ denotes the mean value over $Q$. Now we use the proof of Theorem \ref{cor3} replacing 
$a_Q$ by $a_Q  |\psi|_Q$ and obtain 
\[
 \aligned
I
& \leq C \int_R \:\sup_{Q \in \mathcal{D}(R,E)} a_Q |\psi|_Q\chi_Q (x) \:dx\\
 &\leq C \int_R \:\sup_{Q \in \mathcal{D}(R,E)} a_Q \chi_Q (x)  M\psi(x) \:dx,
  \endaligned 
 \]
 where $M\psi(x)$ stands for the non-centered Hardy-Littlewood maximal function  over cubes.
  We finish applying the H\"{o}lder inequality and the boundedness of the maximal operator in $L^{p'}$.


\bibliographystyle{amsplain}

\end{document}